\author{Claire Amiot}
\address{Institut Fourier, 100 rue des maths, 38402 Saint Martin d'H\`eres}
\email{claire.amiot@ujf-grenoble.fr}\thanks{partially supported by the ANR project ANR-09-BLAN-0039-02.}
\numberwithin{equation}{section}
\newcommand{\Hom}{{\sf Hom }}\newcommand{\RHom}{\mathbf{R}{\sf Hom }}
\newcommand{\Ext}{{\sf Ext }}
\renewcommand{\mod}{{\sf mod \hspace{.02in}  }}
\newcommand{\fd}{{\sf fd \hspace{.02in}  }}
\newcommand{\Mod}{{\sf Mod \hspace{.02in} }}
\newcommand{\proj}{{\sf proj \hspace{.02in} }}
\newcommand{\gr}{{\sf gr \hspace{.02in} }}
\newcommand{\qgr}{{\sf qgr \hspace{.02in} }}
\newcommand{\grproj}{{\sf grproj \hspace{.02in} }}
\newcommand{\ten}{\otimes}
\newcommand{\lten}{\overset{\mathbf{L}}{\ten}}
\newcommand{\Talg}{{\sf T}}
\newcommand{\gldim}{{\sf gl.dim \hspace{.02in}}}
\newcommand{\Sing}{{\sf Sing}}
\newcommand{\Dd}{\mathcal{D}}
\newcommand{\Pp}{\mathcal{P}}
\newcommand{\SSS}{\mathbb{S}}
\newcommand{\Db}{\mathcal{D}^{\rm b}}
\newcommand{\bsm}{\begin{smallmatrix}}
\newcommand{\esm}{\end{smallmatrix}}
\newtheorem{theorem}{Theorem}[section]
\newtheorem*{thm*}{Th{\'e}or{\`e}me}
\newtheorem{corollary}[theorem]{Corollary}
\theoremstyle{remark}
\newtheorem{remark}[theorem]{Remark}
\theoremstyle{definition}
\newtheorem{definition}[theorem]{Definition}
\title{Preprojective algebras, singularity categories and orthogonal decompositions}
\begin{document}
\maketitle
\begin{abstract}
In this note we use results of \cite{Min11} and \cite{AIR11} to construct an embedding of the graded singularity category of certain graded Gorenstein algebras into the derived categories of coherent sheaves over its projective scheme. These graded algebras are constructed using the preprojective algebras of $d$-representation infinite algebras as defined in \cite{HIO12}. We relate this embedding to the construction of a semi-orthogonal decomposition of the derived category of coherent sheaves over the projective scheme of a Gorenstein algebra of parameter $1$ described in \cite{Orl05}.
\end{abstract}
\section{Introduction}

To a commutative graded Noetherian ring, Orlov associates in \cite{Orl05} the graded singularity category $\Sing^{\gr}(R)$ defined as the Verdier localization of the bounded derived category $\Db(\gr R)$ by the full triangulated subcategory of perfect complexes $\Db(\grproj R)$. This category is a graded analogue of the singularity category $\Sing (R)$ of $R$ which captures many properties of the singularities of the affine scheme ${\rm Spec}(R)$. Associated to the graded algebra $R$, another natural triangulated category to consider is the derived category $\Db(\qgr R)$ of graded tails of $R$, where $\qgr R$ is the quotient of the abelian category $\gr R$ of finitely generated graded $R$-modules by the subcategory $\fd\gr R$ of finite dimensional ones. By a classical theorem due to Serre, the category $\qgr R$ is equivalent to the category of coherent sheaves of the projective scheme ${\rm Proj}(R)$. When the algebra $R$ is Gorenstein, Orlov relates these two categories. More precisely, when the Gorentein parameter of $R$ is positive, there is an embedding \begin{equation}\label{Orlovintro} \xymatrix{\Sing^{\gr}(R)\ar@{^(->}[r] & \Db(\qgr R)}.\end{equation}
This applies in particular when $R$ is the ring of homogenous coordinates of an hypersurface in $\mathbb{P}^N$ which is a Fano variety.

More recently, in the context of non commutative algebraic geometry, Minamoto and Mori introduced the notion of quasi-Fano algebras. For such an algebra $\Lambda$, Minamoto contructs in \cite{Min11} a triangle equivalence between the derived category of the module category $\Mod \Lambda$ and the derived category of graded tails ${\sf QGr} \Pi$ where $\Pi$ is the tensor algebra over $\Lambda$ of a certain $\Lambda$-bimodule. This applies in particular to $d$-representation infinite algebras introduced in \cite{HIO12} and gives a triangle equivalence linking a $d$-representation infinite algebra $\Lambda$ with its associated $(d+1)$-preprojective algebra $\Pi$. In particular, when $\Pi$ is Noetherian we obtain an equivalence 
\begin{equation}\label{Minamotointro} \xymatrix{\Db(\mod \Lambda)\ar[r]^-{\sim} & \Db(\qgr \Pi)}.\end{equation}
 
On the other hand, in the paper \cite{AIR11}, for a $d$-representation infinite algebra $\Lambda$ with Noetherian preprojective algebra $\Pi$, we construct a triangle equivalence 
 \begin{equation}\label{AIRintro} \xymatrix{\Db(\mod \Lambda/\Lambda e\Lambda) \ar[r]^-{\sim} & \Sing^{\gr}(e\Pi e)},\end{equation}
where $e$ is an idempotent of $\Lambda$ satisfying some finiteness conditions. These conditions ensure in particular that the categories $\qgr \Pi$ and $\qgr e\Pi e$ are equivalent and that the restriction functor $$\xymatrix{\Db(\mod \Lambda/\Lambda e \Lambda)\ar[r]& \Db(\mod \Lambda)}$$ is fully faithful. Therefore combining the equivalences \eqref{Minamotointro} and \eqref{AIRintro} we obtain an embedding 
\begin{equation}\label{embeddingintro} \xymatrix{\Sing^{\gr}(e\Pi e)\ar@{^(->}[r] & \Db(\qgr e\Pi e)}.\end{equation}
The aim of this note is to show that this embedding is a particular case of Orlov's functor \eqref{Orlovintro}. This description in term of restriction functors gives a better insight of different properties of the functor \eqref{Orlovintro}, and permits, for instance, to understand the action of the degree shift functor of $\Sing^\gr(R)$ inside $\Db(\qgr R)$.

The plan of the paper is the following. We start in Section~\ref{section1} by recalling the definition of higher representation finite algebras and their preprojective algebras and state Minamoto's equivalence in this particular case. In Section~\ref{section2}, we recall results of \cite{AIR11} and deduce an embedding of type~\eqref{embeddingintro}. The main result of this note is given in Section~\ref{section3} where we prove that this embedding is the same as the one contructed by Orlov. This permits to recover some results of \cite{KMV08}. Some examples are treated in Section \ref{section4}. 
\subsection*{Acknowledgment}
I am grateful to Osamu Iyama for suggesting me the connection between the papers \cite{Orl05,Min11,AIR11}. I would also thank Helmut Lenzing and Idun Reiten for interesting conversations on the subject, and Lidia Angeleri-H\"ugel for explanations on recollements and for the reference \cite{AKL11}. 

\subsection*{Notation}
Throughout this paper $k$ is an algebraically closed field and all algebras are $k$-algebras. We denote by $D$ the $k$-dual, that is $D(-)=\Hom_k(-,k)$. 

Let $A$ be a $k$-algebra. All modules in this paper are right modules. We denote by $\mod A$ the category of finitely presented modules, by $\proj A$ the category of finitely generated projective $A$-modules and by $\fd A$ the category of finite dimensional $A$-modules. The envelopping algebra $A^{\rm op}\ten A$ is denoted $A^{\rm e}$. 

If $A$ is a graded $k$-algebra, we denote by $\gr A$ the category of finitely presented graded $A$-modules, and by $\gr\proj A$ the category of finitely generated projective graded $A$-modules. For a graded module $M=\bigoplus_{i\in\mathbb{Z}}\in\gr A$, we denote by $M(1)$ the graded module whose graded pieces are given by $(M(1))_i=M_{i+1}$. 

We denote by $\Dd(-)$ the derived category and by $\Db(-)$ the bounded derived category.

\section{$d$-Representation Infinite algebras and preprojective algebras}\label{section1}
\begin{definition}\cite{HIO12} Let $d$ be a non negative integer. A finite dimensional algebra $\Lambda$ is said to be \emph{$d$-representation infinite} if the following two conditions hold:
\begin{itemize}
\item $\gldim \Lambda \leq d$
\item $\{\SSS_d^{-i}\Lambda,\ i\in\mathbb{N}\}\subset \mod \Lambda$,
\end{itemize} 
where $\SSS_d$ is the autoequivalence $\SSS\circ[-d]=-\lten D\Lambda[-d]$ of $\Db(\mod \Lambda)$. 
\end{definition}

\begin{definition}
Let $\Lambda$ be a $d$-representation infinite algebra. Its associated \emph{preprojective algebra} (also called $(d+1)$-preprojective algebra) is defined to be the tensor algebra \[\Pi=\Pi_{d+1}(\Lambda):=\Talg_\Lambda \Ext^{d}_\Lambda(D\Lambda,\Lambda). \]
\end{definition}
The algebra $\Pi$ is naturally positively graded. We recall that $\Pi$ is called \emph{left graded coherent} if the category $\gr \Pi$ is closed under kernels (see also \cite{HIO12} for equivalent definitions). We denote by $\qgr\Pi$ the quotient category of $\gr\Pi$ by the torsion subcategory $\fd\gr\Pi$ of finite-dimensional graded $\Pi$-modules, by $q:\gr\Pi\to\qgr\Pi$ the natural projection and by $\mathbf{q}:\Db(\gr\Pi)\to\Db(\qgr\Pi)$ the associated left derived functor. If $M$ and $N$ are in $\gr\Pi$, by definition we have $$\Hom_{\qgr\Pi}(qM,qN)=\lim_{p\to\infty}\Hom_{\gr\Pi}(M_{\geq p},N),$$ where $M_{\geq p}$ is the graded module $\bigoplus_{i\geq p}M_i$.

\begin{theorem}\cite[Thm 3.12]{Min11}\label{thmMinamoto}
Let $\Lambda$ be a $d$-representation infinite algebra and $\Pi$ its associated preprojective algebra. Assume that $\Pi$ is left graded coherent, then the triangle functor $ \mathbf{q}(-\lten_\Lambda \Pi):\Db(\mod \Lambda)\longrightarrow \Db(\qgr \Pi)$ is a triangle equivalence. Moreover there is a commutative diagram 
\begin{equation}\label{diagcommMinamoto}\xymatrix{\Db(\mod \Lambda)\ar[d]_{\SSS_d^{-1}}\ar[rr]^{\mathbf{q}(-\lten_\Lambda \Pi)} &&\Db(\qgr \Pi)\ar[d]^{(1)}\\ 
\Db(\mod \Lambda)\ar[rr]^{\mathbf{q}(-\lten_\Lambda \Pi)} &&\Db(\qgr \Pi)}. \end{equation}
\end{theorem}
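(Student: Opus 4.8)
The plan is to identify $F:=\mathbf{q}(-\lten_\Lambda\Pi)$ with the quasi-inverse of a tilting equivalence whose tilting object is the ``structure sheaf'' $\mathcal{O}:=\mathbf{q}(\Pi)=q(\Pi)$ of the noncommutative projective scheme $\qgr\Pi$, and then to deduce the commutative square from the way $-\lten_\Lambda\Pi$ transports $\SSS_d^{-1}$.

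\textbf{The graded structure of $\Pi$.} Since $\gldim\Lambda\le d$ the Serre functor yields $\SSS_d^{-1}=-\lten_\Lambda U$ with $U=\Ext^d_\Lambda(D\Lambda,\Lambda)$ a $\Lambda$-bimodule, and the hypothesis $\SSS_d^{-i}\Lambda\in\mod\Lambda$ translates into $\Tor^\Lambda_{>0}(U,U^{\ten i})=0$ for all $i$; hence $U^{\ten_\Lambda i}\cong\SSS_d^{-i}\Lambda$ as bimodules and $\Pi=\bigoplus_{i\ge0}\SSS_d^{-i}\Lambda$ as graded algebras. Each $\Pi_i$ has projective dimension $\le d$ over $\Lambda$, so $-\lten_\Lambda\Pi$ is a bounded triangle functor $\Db(\mod\Lambda)\to\Db(\grproj\Pi)\subseteq\Db(\gr\Pi)$, $F$ is a well-defined triangle functor, and $F(\Lambda)=\mathcal{O}$. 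As $F$ is triangulated and respects direct summands it is determined by this value: $F=(-)\lten_\Lambda\mathcal{O}$.

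\textbf{$\mathcal{O}$ is a tilting object with $\End(\mathcal{O})\cong\Lambda$.} Here one uses the homological structure of $\Pi$ established in \cite{HIO12}: under the coherence hypothesis, $\qgr\Pi$ has finite global dimension, $\Pi$ is bimodule $(d+1)$-Calabi--Yau of Gorenstein parameter $1$, and graded local duality gives $\mathrm{R}\Gamma_{\mathfrak{m}}(\Pi)\simeq D\Pi(1)[-(d+1)]$, concentrated in cohomological degree $d+1$ with vanishing degree-$0$ component $(D\Pi)_1=D(\Pi_{-1})=0$. Feeding this into the canonical triangle $\mathrm{R}\Gamma_{\mathfrak{m}}(\Pi)\to\Pi\to\mathrm{R}\omega(\mathcal{O})\to$, where $\omega$ is the section functor right adjoint to $q$, and taking degree-$0$ cohomology gives $\Ext^j_{\qgr\Pi}(\mathcal{O},\mathcal{O})=H^j(\mathrm{R}\omega\,\mathcal{O})_0=0$ for $j\ne0$; for $j=0$ one computes $\Hom_{\qgr\Pi}(\mathcal{O},\mathcal{O})=\varinjlim_p\End_{\gr\Pi}(\Pi_{\ge p})_0=\varinjlim_p\End_{\Lambda}(\SSS_d^{-p}\Lambda)\cong\Lambda$, using that $\SSS_d$ is an autoequivalence. (The same device applied to $qM(i)$ identifies $\RHom_{\qgr\Pi}(\mathcal{O}(-i),qM)$ with $M$ up to the contribution of $\mathrm{R}\Gamma_{\mathfrak{m}}$, which is what one needs for faithfulness.) For generation, finite global dimension gives $\Db(\qgr\Pi)=\thick\{\mathcal{O}(i):i\in\ZZ\}$, and tensoring the bimodule $(d+1)$-Calabi--Yau resolution of $\Pi$ yields, for each $i$, a bounded complex of graded projectives resolving $\Pi(i)$ whose nonzero terms involve only $\Pi(i-1),\dots,\Pi(i-d-1)$ up to finite-dimensional modules, so that $q\Pi(i)\in\thick(\mathcal{O}(i-1),\dots,\mathcal{O}(i-d-1))$ and one propagates membership in $\thick(\mathcal{O})$ to all shifts. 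I expect this passage from the bimodule resolution to the $\Ext$-vanishing and to generation to be the main obstacle; the remaining steps are formal once the Gorenstein/Calabi--Yau package for $\Pi$ is available.

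\textbf{Conclusion and the commutative square.} With $\mathcal{O}$ a classical generator of the algebraic triangulated category $\Db(\qgr\Pi)$ satisfying $\Ext^{>0}(\mathcal{O},\mathcal{O})=0$ and $\End(\mathcal{O})\cong\Lambda$, a standard tilting theorem produces a triangle equivalence $\Db(\mod\Lambda)\xrightarrow{\ \sim\ }\Db(\qgr\Pi)$, namely $(-)\lten_\Lambda\mathcal{O}$, which by the first step is exactly $F$. Finally, viewing $U$ as a $\Lambda$-bimodule in internal degree $0$, the $\Tor$-vanishing above gives $U\lten_\Lambda\Pi\simeq U\ten_\Lambda\Pi\cong\Pi(1)$ as complexes of $\Lambda$-$\Pi$-bimodules, since $(U\ten_\Lambda\Pi)_j=U\ten_\Lambda\Pi_j\cong U^{\ten(j+1)}=\Pi_{j+1}=\Pi(1)_j$. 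Hence, naturally in $M\in\Db(\mod\Lambda)$,
\[ \SSS_d^{-1}M\lten_\Lambda\Pi=(M\lten_\Lambda U)\lten_\Lambda\Pi\simeq M\lten_\Lambda(U\lten_\Lambda\Pi)\simeq M\lten_\Lambda\Pi(1)=(M\lten_\Lambda\Pi)(1), \]
and applying $\mathbf{q}$, which commutes with the degree shift $(1)$, yields the natural isomorphism $F\circ\SSS_d^{-1}\simeq(1)\circ F$, i.e. the commutativity of \eqref{diagcommMinamoto}.
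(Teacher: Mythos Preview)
The paper does not prove this statement: Theorem~\ref{thmMinamoto} is quoted verbatim from \cite[Thm~3.12]{Min11} and used as a black box, so there is no ``paper's own proof'' to compare against. What you have written is, in outline, a plausible reconstruction of Minamoto's argument recast in tilting language: identify $\mathcal{O}=q\Pi$ as a tilting object of $\Db(\qgr\Pi)$ with endomorphism ring $\Lambda$, and read off the commutative square from the bimodule isomorphism $U\lten_\Lambda\Pi\simeq\Pi(1)$.

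Two places deserve tightening. First, your computation $\varinjlim_p\End_{\gr\Pi}(\Pi_{\ge p})_0\cong\varinjlim_p\End_\Lambda(\SSS_d^{-p}\Lambda)$ is not justified as written: a graded $\Pi$-endomorphism of $\Pi_{\ge p}$ is determined by a $\Lambda$-map on $\Pi_p$, but not every $\Lambda$-endomorphism of $\Pi_p$ extends, so the identification is not automatic. The cleaner route, which you also sketch, is to use the local-cohomology triangle directly: since $\mathrm{R}\Gamma_{\mathfrak m}(\Pi)$ sits in cohomological degree $d+1\ge 2$, the degree-$0$ piece of $\mathrm{R}\omega(\mathcal O)$ coincides with $\Pi_0=\Lambda$ and all higher $\Ext$ vanish. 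Second, your generation argument via ``tensoring the bimodule Calabi--Yau resolution'' is vague; it is simpler to argue that $\Pi$ has finite global dimension (being homologically smooth), so every $M\in\gr\Pi$ admits a finite graded projective resolution and hence $qM\in\thick\{\mathcal O(i):i\in\ZZ\}$; then full faithfulness of $F$ together with $F(\SSS_d^{-i}\Lambda)\simeq\mathcal O(i)$ shows the essential image of $F$ contains all $\mathcal O(i)$ and therefore equals $\Db(\qgr\Pi)$. With these two fixes the argument goes through, and the derivation of the commutative square from $U\ten_\Lambda\Pi\cong\Pi(1)$ is correct.
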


\begin{remark}\label{remarkpreprojCY} The preprojective algebras of $d$-representation infinite algebras satisfies very nice properties. In particular they are bimodule $(d+1)$-Calabi-Yau of Gorenstein parameter $1$, that is they have global dimension $(d+1)$ and there is an isomorphism \[\RHom_{\Pi^{\rm e}}(\Pi,\Pi^{\rm e})[d+1]\simeq \Pi(1) \quad \textrm{in }\Dd(\gr \Pi^{\rm e}).\]
In fact, the preprojective construction gives a bijection between $d$-representation infinite algebras and bimodule $(d+1)$-Calabi-Yau algebras of Gorenstein parameter $1$ (cf \cite[Thm 3.5]{AIR11} and \cite[Thm 4.35]{HIO12}). 
\end{remark}

\section{The singularity category of a Gorenstein algebra}\label{section2}

\begin{definition}\cite{Orl05} Let $R=\oplus_{p\geq 0}R_p$ be a positively graded algebra. The singularity category  is defined to be the Verdier localisation \[ \Sing^{\gr}(R):=\Db(\gr R)/\Db(\gr\proj R).\]
We denote by $\pi:\Db(\gr R)\to \Sing^{\gr}(R)$ the localisation functor.
\end{definition}
 When $R$ is Gorenstein, that is when the injective dimension of $R$ is finite as right and left $R$-module, then the singularity category can be interpreted as the stable category of graded maximal Cohen-Macaulay $R$-modules \cite{Buc87}. 

\begin{theorem}\label{thmAIR}\cite{AIR11}
Let $\Lambda$ be a $d$-representation infinite algebra such that its preprojective algebra $\Pi$ is Noetherian. Assume there exists an idempotent $e$ in $\Lambda$ such that 
\begin{itemize}
\item[(a)] $\dim_k \underline{\Pi}<\infty$ where $\underline{\Pi}:=\Pi/\Pi e\Pi$;
\item[(b)] $e\Lambda (1-e)=0$.
\end{itemize}
Then the functor given by the composition \[\xymatrix{\Db(\mod \underline{\Lambda})\ar[r]^-{\rm Res.} & \Db(\mod \Lambda)\ar[rr]^{-\lten_\Lambda \Pi e} && \Db(\gr e\Pi e)\ar[r]^{\pi} & \Sing^{\gr}(e\Pi e) }\] is a triangle equivalence, where $\underline{\Lambda}:=\Lambda/\Lambda e\Lambda$ and the functor ${\rm Res}:\Db(\mod \underline{\Lambda})\to\Db(\mod \Lambda)$ is the restriction functor. Moreover there is a commutative diagram
\begin{equation}\label{diagcommAIR}\xymatrix{\Db(\mod \underline{\Lambda}) \ar[rr]^-{\pi(-\lten_\Lambda\Pi e)}\ar[d]_{\underline{\SSS}_d^{-1}} && \Sing^{\gr}(e\Pi e)\ar[d]^{(1)}\\
\Db(\mod \underline{\Lambda}) \ar[rr]^-{\pi(-\lten_\Lambda \Pi e)} && \Sing^{\gr}(e\Pi e),}\end{equation} where $\underline{\SSS}_d$ is the autoequivalence $-\lten_{\underline{\Lambda}}D\underline{\Lambda}[-d]$ of $\Db(\mod \underline{\Lambda})$.

\end{theorem}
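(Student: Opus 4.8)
The plan is to regard the composite functor as the comparison of two different Verdier localizations of one and the same category $\Db(\gr\Pi)$: on one side the Serre localization $\mathbf{q}$ producing $\qgr\Pi$ (and, through Theorem~\ref{thmMinamoto}, $\Db(\mod\Lambda)$), and on the other side the idempotent localization given by $-\lten_\Pi\Pi e$ followed by $\pi$, producing $\Sing^{\gr}(e\Pi e)$. The engine making the two comparable is that $\Pi$ has finite global dimension $d+1$ (Remark~\ref{remarkpreprojCY}), so that $\Db(\gr\Pi)=\Db(\gr\proj\Pi)$ and $\Sing^{\gr}(\Pi)=0$; all the ``singularity'' of $e\Pi e$ must therefore originate from the idempotent, and is controlled by the finite-dimensional algebra $\underline{\Pi}$.

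First I would set up two recollements. On the module side, condition (b) makes $\Lambda$ triangular with respect to $e$, so $\Lambda e\Lambda$ is stratifying and the restriction functor sits in a recollement
\[\xymatrix{\Db(\mod \underline{\Lambda})\ar[r]^-{\mathrm{Res}} & \Db(\mod \Lambda)\ar[r]^-{(-)e} & \Db(\mod e\Lambda e)};\]
in particular $\mathrm{Res}=i_*$ is fully faithful with $\mathrm{Im}(i_*)=\Ker((-)e)$, and the quotient of $\Db(\mod\Lambda)$ by $\thick(e\Lambda)=\mathrm{Im}(j_!)$ is equivalent to $\Db(\mod\underline{\Lambda})$ via $i^*=-\lten_\Lambda\underline{\Lambda}$, with $i^*i_*\cong\mathrm{id}$. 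On the graded side, condition (a) together with the Noetherianity and finite global dimension of $\Pi$ supplies the Tor-vanishing needed for $e$ to be stratifying on $\Pi$, so that $-\lten_\Pi\Pi e$ is a Verdier localization whose kernel is the thick subcategory $\Db(\gr\underline{\Pi})$ of complexes with finite-dimensional, $e$-annihilated cohomology.

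Next I would assemble the functors. Since $\Db(\gr\Pi)=\Db(\gr\proj\Pi)$, the composite $\Phi:=\pi\circ(-\lten_\Pi\Pi e)$ is again a Verdier localization onto $\Sing^{\gr}(e\Pi e)$, with $\Ker\Phi=\thick\!\big(\Db(\gr\underline{\Pi})\cup\{e_i\Pi: e_i\leq e\}\big)$, because $e_i\Pi\lten_\Pi\Pi e=e_i\Pi e$ is $e\Pi e$-projective exactly for the primitive idempotents $e_i$ appearing in $e$. Pre-composing with $T:=-\lten_\Lambda\Pi$ and using $e_i\Lambda\lten_\Lambda\Pi=e_i\Pi$ gives $\Psi:=\Phi\circ T=\pi\circ(-\lten_\Lambda\Pi e)$, so the functor of the theorem is $F=\Psi\circ\mathrm{Res}$. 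Now $\Psi$ annihilates $\thick(e\Lambda)=\mathrm{Im}(j_!)=\Ker(i^*)$, hence factors as $\Psi\cong\bar\Psi\circ i^*$, and since $i^*i_*\cong\mathrm{id}$ we get $F\cong\bar\Psi$. Because $\Db(\gr\Pi)=\thick(\{e_i\Pi\})=\thick(\mathrm{Im}\,T)$ and $\Phi$ is essentially surjective, the objects $\Psi(e_i\Lambda)$ with $e_i\leq 1-e$ generate $\Sing^{\gr}(e\Pi e)$; thus $\mathrm{Im}(F)$ generates the target. It therefore suffices to prove that $F$ is fully faithful: given this, the generation of the target together with idempotent completeness forces $F$ to be an equivalence, and forces $\Ker\Psi$ to be exactly $\thick(e\Lambda)$.

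The hard part is precisely this full faithfulness, i.e.\ computing $\Hom_{\Sing^{\gr}(e\Pi e)}(FY,FY'[n])$ and matching it with $\Hom_{\Db(\mod \underline{\Lambda})}(Y,Y'[n])$. The plan is to reduce, via the recollement triangle $j_!j^*X\to X\to i_*i^*X$, any such computation to one with induced objects $X\lten_\Lambda\Pi$ inside $\Db(\gr\Pi)$, and then to evaluate the resulting morphisms in the quotient $\Sing^{\gr}(e\Pi e)$ using the bimodule $(d+1)$-Calabi--Yau property of Gorenstein parameter $1$ (Remark~\ref{remarkpreprojCY}), which provides the duality needed to turn morphisms in the singularity category into morphisms over $\Lambda$ and simultaneously matches the relevant Serre-type twist with the degree shift. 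The essential point, and the main obstacle, is that every discrepancy appearing in this comparison---between the Serre and the singularity localizations, and between $\Lambda$- and $\underline{\Lambda}$-resolutions---is finite-dimensional by conditions (a) and (b) and hence dies in $\Sing^{\gr}(e\Pi e)$; isolating and killing these finite-dimensional error terms is where the two finiteness hypotheses are genuinely consumed. Finally, the commutative diagram \eqref{diagcommAIR} I would deduce from Minamoto's diagram \eqref{diagcommMinamoto}: the degree shift $(1)$ on $\qgr\Pi$ corresponds under $\qgr\Pi\simeq\qgr e\Pi e$ (again condition (a)) to the shift inducing $(1)$ on $\Sing^{\gr}(e\Pi e)$, while $\mathrm{Res}$ intertwines $\underline{\SSS}_d^{-1}=-\lten_{\underline{\Lambda}}D\underline{\Lambda}[-d]$ with $\SSS_d^{-1}$ (a compatibility coming from condition (b), since $D\underline{\Lambda}$ is the restriction of the relevant bimodule); restricting \eqref{diagcommMinamoto} along $\mathrm{Res}$ and projecting to $\Sing^{\gr}(e\Pi e)$ then yields \eqref{diagcommAIR}.
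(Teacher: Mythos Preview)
The paper does not prove this statement: Theorem~\ref{thmAIR} is quoted from \cite{AIR11} without proof, so there is no ``paper's own proof'' to compare against. That said, your outline has two concrete gaps worth flagging.

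First, the argument for the commutative diagram \eqref{diagcommAIR} is built on a false premise. You assert that ``$\mathrm{Res}$ intertwines $\underline{\SSS}_d^{-1}$ with $\SSS_d^{-1}$'', i.e.\ that $\mathrm{Res}\circ\underline{\SSS}_d^{-1}\cong\SSS_d^{-1}\circ\mathrm{Res}$ on $\Db(\mod\underline{\Lambda})$. This is exactly what fails: the computation carried out later in this very paper (proof of Corollary~4.4, using \cite[Lemma~4.2]{AIR11}) shows that for $N\in\Db(\mod\underline{\Lambda})$ one has
\[
\mathrm{Res}(\underline{\SSS}_d^{-1}N)\;\simeq\;{\sf Cone}\bigl(\SSS_d^{-1}(\mathrm{Res}\,N)\ten_\Lambda\Lambda e\Lambda\to\SSS_d^{-1}(\mathrm{Res}\,N)\bigr),
\]
and the correction term does \emph{not} vanish in $\Db(\mod\Lambda)$. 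Indeed the paper emphasises that ``the functor $\Phi_0$ does not commute with the degree shift'', which is precisely the manifestation of this non-intertwining. So you cannot obtain \eqref{diagcommAIR} just by restricting \eqref{diagcommMinamoto}; one has to argue separately that the obstruction term becomes perfect after $-\lten_\Lambda\Pi e$, and you have not done so. In \cite{AIR11} the diagram is obtained by a different route (via the tilting description of $(1-e)\Pi e$), not by descent from Minamoto's equivalence.

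Second, your generation argument for essential surjectivity is broken. You write ``$\Db(\gr\Pi)=\thick(\{e_i\Pi\})=\thick(\mathrm{Im}\,T)$'', but $T=-\lten_\Lambda\Pi$ lands in complexes of graded projectives generated in degree~$0$; the objects $\Pi(j)$ for $j\neq 0$ do not lie in $\thick(\{e_i\Pi(0)\})$, so $\mathrm{Im}\,T$ does not thickly generate $\Db(\gr\Pi)$. Consequently you cannot conclude that $\Psi(e_i\Lambda)$, $e_i\leq 1-e$, generate $\Sing^{\gr}(e\Pi e)$ this way. One would need closure of the image under the degree shift $(1)$---but that is \eqref{diagcommAIR}, which you have not yet established, so the argument is circular. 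The full-faithfulness step is also only gestured at (``isolating and killing finite-dimensional error terms''); in \cite{AIR11} this is handled by exhibiting $(1-e)\Pi e$ as a tilting object in the stable category of graded Cohen--Macaulay $e\Pi e$-modules, which simultaneously yields fully faithfulness, essential surjectivity, and the identification of the endomorphism algebra with $\underline{\Lambda}$.
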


Combining Theorems~\ref{thmMinamoto} and~\ref{thmAIR} we get the following consequence.

\begin{corollary}\label{correcollement} Under the hypotheses of Theorem~\ref{thmAIR} there is an embedding \[\xymatrix{\Sing^{\gr}(e\Pi e)\ar@{^(->}[r] & \Db(\qgr e\Pi e)}.\] If moreover the global dimension of the algebra $e\Lambda e$ is finite, there is a recollement of triangulated categories 
\[\xymatrix{\Sing^{\gr}(e\Pi e)\ar[r] & \Db(\qgr e\Pi e)\ar@<1ex>[l]\ar@<-1ex>[l]\ar[r] & \Db(\mod e\Lambda e)\ar@<1ex>[l]\ar@<-1ex>[l]}.\]
\end{corollary}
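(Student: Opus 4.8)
The plan is to string together the equivalences of Theorems~\ref{thmMinamoto} and~\ref{thmAIR} with the restriction and quotient functors coming from the triple of algebras $\underline{\Lambda}=\Lambda/\Lambda e\Lambda$, $\Lambda$, and $e\Lambda e$. First I would recall that, since $\Lambda$ has finite global dimension, the recollement of module categories attached to the idempotent $e$ induces a recollement of the bounded derived categories
\[\xymatrix{\Db(\mod\underline{\Lambda})\ar[r] & \Db(\mod\Lambda)\ar@<1ex>[l]\ar@<-1ex>[l]\ar[r]^-{-\lten_\Lambda\Lambda e} & \Db(\mod e\Lambda e)\ar@<1ex>[l]\ar@<-1ex>[l]}\]
precisely when $\gldim e\Lambda e<\infty$ (this is the standard criterion for a stratifying idempotent to give a derived recollement; here condition~(b), $e\Lambda(1-e)=0$, guarantees that $\Lambda e\Lambda$ is stratifying and that $\mathrm{Res}\colon\Db(\mod\underline\Lambda)\to\Db(\mod\Lambda)$ is fully faithful, as already used in Theorem~\ref{thmAIR}). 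For the first, unconditional, assertion one only needs the fully faithful functor $\mathrm{Res}$ together with the equivalences~\eqref{Minamotointro} and~\eqref{AIRintro}: composing $\pi(-\lten_\Lambda\Pi e)^{-1}$ with $\mathrm{Res}$ and then with $\mathbf{q}(-\lten_\Lambda\Pi)$ produces a fully faithful functor $\Sing^{\gr}(e\Pi e)\hookrightarrow\Db(\qgr\Pi)$, and finally one identifies $\Db(\qgr\Pi)\simeq\Db(\qgr e\Pi e)$ using that the finiteness hypotheses (a)--(b) of Theorem~\ref{thmAIR} force $\qgr\Pi\simeq\qgr e\Pi e$ (the module $\Pi e$ induces this equivalence; $\dim_k\underline\Pi<\infty$ kills the relevant torsion).

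For the recollement statement I would transport the derived module recollement above along the two equivalences. The left-hand term $\Db(\mod\underline\Lambda)$ is carried by $\pi(-\lten_\Lambda\Pi e)$ onto $\Sing^{\gr}(e\Pi e)$ by Theorem~\ref{thmAIR}; the middle term $\Db(\mod\Lambda)$ is carried by $\mathbf{q}(-\lten_\Lambda\Pi)$ onto $\Db(\qgr\Pi)\simeq\Db(\qgr e\Pi e)$ by Theorem~\ref{thmMinamoto}; and the right-hand term $\Db(\mod e\Lambda e)$ is left untouched. What has to be checked is that under these identifications the functors of the module recollement correspond to the functors asserted in the statement — in particular that the composite $\Db(\qgr e\Pi e)\to\Db(\mod e\Lambda e)$ is (up to the Minamoto equivalence) the localisation $-\lten_\Lambda\Lambda e$, and that the kernel of this localisation is exactly the essential image of $\Sing^{\gr}(e\Pi e)$. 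The kernel identification reduces to the observation that, under Minamoto's equivalence, objects of $\Db(\mod\Lambda)$ supported on $\underline\Lambda$ correspond to objects of $\Db(\qgr\Pi)$ in the image of the singularity category; this is essentially the commutativity already built into diagrams~\eqref{diagcommMinamoto} and~\eqref{diagcommAIR}, compatibility of the two being guaranteed by the fact that $\mathbf{q}(-\lten_\Lambda\Pi)$ restricted to $\Db(\mod\underline\Lambda)$ factors through $\pi(-\lten_\Lambda\Pi e)$ composed with the embedding $\Sing^{\gr}(e\Pi e)\hookrightarrow\Db(\qgr e\Pi e)$.

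The main obstacle will be the bookkeeping in the previous paragraph: making the square
\[\xymatrix{\Db(\mod\underline\Lambda)\ar[r]^-{\mathrm{Res}}\ar[d]_{\pi(-\lten_\Lambda\Pi e)} & \Db(\mod\Lambda)\ar[d]^{\mathbf{q}(-\lten_\Lambda\Pi)}\\ \Sing^{\gr}(e\Pi e)\ar@{^(->}[r] & \Db(\qgr e\Pi e)}\]
commute up to a specified natural isomorphism, since the bottom embedding is only defined as a zig-zag of the three equivalences and one must verify it agrees with the naive composite. Concretely this amounts to producing a natural isomorphism between $\mathbf{q}(-\lten_\Lambda\Pi)\circ\mathrm{Res}$ and a functor that, post-composed with the inverse of $\Db(\qgr\Pi)\simeq\Db(\qgr e\Pi e)$, computes $\mathbf{q}((-\lten_\Lambda\Pi e)\ten_{e\Pi e}e\Pi)$ — i.e.\ tracking that $\Pi e\ten_{e\Pi e}e\Pi\to\Pi$ becomes invertible in $\qgr$. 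Once that coherence is in hand, the recollement axioms for the transported diagram follow formally from those of the derived module recollement, since equivalences preserve recollements, and the two outer functors (adjoints) are obtained by transport as well. I expect no difficulty in the adjunctions or in the finiteness reductions; the sole real work is the naturality/coherence check tying the abstract embedding of Corollary to the concrete localisation functor.
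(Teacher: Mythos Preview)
Your proposal is correct and follows essentially the same route as the paper: first observe that Noetherian implies left graded coherent so that Theorem~\ref{thmMinamoto} applies, establish $\qgr\Pi\simeq\qgr e\Pi e$ via $(-)e$ with inverse $-\ten_{e\Pi e}e\Pi$ using hypothesis~(a), use hypothesis~(b) for full faithfulness of ${\rm Res}$, and then invoke \cite[Lemma~3.4]{AKL11} (via the identity $\Lambda e\lten_{e\Lambda e}e\Lambda\simeq\Lambda e\Lambda$, which holds because $e\Lambda(1-e)=0$ forces $e\Lambda\simeq e\Lambda e$) to obtain the derived recollement on $\Db(\mod\Lambda)$ and transport it.

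The one place where you overcomplicate matters is the ``main obstacle'' paragraph. For this corollary there is no pre-existing bottom arrow to be compared with: the embedding $\Sing^{\gr}(e\Pi e)\hookrightarrow\Db(\qgr e\Pi e)$ is simply \emph{defined} as the transport of ${\rm Res}$ along the two vertical equivalences, so the square commutes tautologically and the recollement transports formally. The genuine coherence question you raise --- whether this abstractly defined embedding coincides with a canonically given functor --- is precisely the content of Theorem~\ref{mainresult} (where it is identified with Orlov's $\Phi_0$), not of the present corollary.
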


\begin{proof}
First of all, notice that any graded Noetherian ring is left graded coherent, therefore Theorem~\ref{thmMinamoto} applies in the setup of Theorem~\ref{thmAIR}. Then we show that the functor \begin{align*}\gr\Pi &\to \gr e\Pi e\\ M&\mapsto Me\end{align*} induces an equivalence $\qgr\Pi\simeq \qgr e\Pi e$. Indeed, the functor $$-\ten_{e\Pi e} e\Pi:\gr e\Pi e\longrightarrow \gr\Pi$$ induces an inverse since the natural map $M\ten_{\Pi}\Pi e \Pi\to M$ is an isomorphism in $\qgr\Pi$ by hypothesis (a) of Theorem~\ref{thmAIR}.

By assumption (b), the restriction functor ${\rm Res}:\Db(\mod \underline{\Lambda})\to\Db(\mod \Lambda)$  is fully faithful, hence we get an embedding \[\xymatrix{\Sing^{\gr}(e\Pi e)\ar@{^(->}[r] & \Db(\qgr e\Pi e)}.\]

 Finally, we have $e\Lambda\simeq e\Lambda e\oplus e\Lambda(1-e)\simeq e\Lambda e$ as left $e\Lambda e$-module, so we have $\Lambda e\lten_{e\Lambda e}e\Lambda\simeq \Lambda e\ten_{e\Lambda e}e\Lambda\simeq \Lambda e \Lambda$. Therefore by Lemma 3.4 of \cite{AKL11}, if the global dimension of $e\Lambda e$ is finite there is a recollement
\[\xymatrix{ \Db(\mod \Lambda/\Lambda e \Lambda) \ar[r] & \Db(\mod \Lambda)\ar@<1ex>[l]\ar@<-1ex>[l]\ar[r] & \Db(\mod e\Lambda e)\ar@<1ex>[l]\ar@<-1ex>[l]},\]
and thus a recollement \[\xymatrix{\Sing^{\gr}(e\Pi e)\ar[r] & \Db(\qgr e\Pi e)\ar@<1ex>[l]\ar@<-1ex>[l]\ar[r] & \Db(\mod e\Lambda e)\ar@<1ex>[l]\ar@<-1ex>[l]}.\]
\end{proof}

\section{Orlov's orthogonal decomposition}\label{section3}

In this section, we recall Orlov's construction of embeddings \eqref{Orlovintro}, for graded Gorenstein algebras $R=k\oplus R_1\oplus R_2\oplus\ldots$ with positive Gorenstein parameter. Then we prove the main result of this note which links this construction with the results of \cite{Min11} and \cite{AIR11}. 

\begin{definition} A graded Noetherian algebra $R=k\oplus R_1\oplus R_2\oplus\ldots$ is said to be \emph{Gorenstein of parameter $1$} if it has injective dimension $(d+1)$ as left and right module and if there is an isomorphism
\[ \RHom_R(k,R)[d+1]\simeq k(1) \quad \textrm{in }\Db(\mod R^{\rm op}).\]
\end{definition}

\begin{theorem}\cite[Thm 2.5]{Orl05}\label{thmOrlov}
Let $R=k\oplus R_1\oplus R_2\oplus\ldots$ be a positively graded algebra, Noetherian and Gorenstein of parameter $1$. Then for any $i\in\mathbb{Z}$ there exists a fully faithful functor
\[\xymatrix{\Phi_i:\Sing^{\gr}(R)\ar[r]& \Db(\qgr R)}\] and a semi-orthogonal decomposition \[\Db(\qgr R)=\langle \mathbf{q} R(-i), \Phi_i(\Sing^{\gr}(R))\rangle.\]

\end{theorem}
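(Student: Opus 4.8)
The plan is to recover Orlov's theorem (Theorem~\ref{thmOrlov}) via the standard machinery of graded singularity categories, following the approach of \cite{Orl05} adapted to the non-commutative graded Gorenstein setting. First I would set up the two natural functors $\Db(\gr R)\to\Db(\qgr R)$ and $\Db(\gr R)\to\Sing^{\gr}(R)$, together with the truncation subcategories. For each $i\in\ZZ$, let $\Ss_{\geq i}$ denote the full subcategory of $\Db(\gr R)$ consisting of complexes whose cohomology is concentrated in degrees $\geq i$ (in the internal grading), and similarly $\Ss_{<i}$. The key preliminary is to show that the composite $\Ss_{\geq i}\hookrightarrow\Db(\gr R)\xrightarrow{\mathbf q}\Db(\qgr R)$ becomes an equivalence after passing to an appropriate quotient, and simultaneously that $\Db(\gr\proj R)$ interacts well with these truncations; here the Gorenstein hypothesis of parameter $1$ enters decisively, since $\RHom_R(k,R)[d+1]\simeq k(1)$ controls exactly how projective resolutions of truncations behave and guarantees that maximal Cohen--Macaulay modules admit the needed finiteness.

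Next I would construct $\Phi_i$ explicitly. Using Buchweitz's description \cite{Buc87}, identify $\Sing^{\gr}(R)$ with the stable category $\underline{\CM}^{\gr} R$ of graded maximal Cohen--Macaulay modules. Given such a module $M$, one produces its (eventually $2$-periodic-free, i.e. totally acyclic) complete resolution and then takes the stupid truncation at the degree-$i$ spot, pushing the resulting honest complex of graded $R$-modules into $\Db(\qgr R)$ via $\mathbf q$. Well-definedness up to the chosen splitting is the first thing to check; then one checks that $\Phi_i$ is a triangle functor. Fully faithfulness is proved by the usual comparison: $\Hom_{\Db(\qgr R)}(\mathbf q X,\mathbf q Y)$ is computed as a limit of $\Hom_{\gr R}$ over truncations of $X$, and the vanishing of $\Ext^{>0}$ between high truncations of MCM modules (again a consequence of the Gorenstein property and of Serre vanishing in $\qgr R$, which holds since $R$ is Noetherian) forces this limit to stabilise to $\underline{\Hom}_{\gr R}(M,N)$.

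For the semi-orthogonal decomposition, I would verify the two defining conditions. First, $\mathbf q R(-i)$ is an exceptional object and $\Phi_i(\Sing^{\gr}(R))$ is a triangulated subcategory; the semi-orthogonality $\Hom_{\Db(\qgr R)}(\Phi_i(\Sing^{\gr}R),\, \mathbf q R(-i)[n])=0$ for all $n$ (the correct vanishing direction is fixed by the Gorenstein parameter being $+1$) follows from the fact that the image of a complete resolution pairs trivially with the free module $R(-i)$ in the quotient category once one is in the MCM range. Second, generation: any object of $\Db(\qgr R)$ lifts to $\Db(\gr R)$, and by an induction on the width of the cohomology together with the distinguished triangles relating a module to its truncation $M_{\geq i}$ and to shifts of $R(-i)$, one shows $\Db(\qgr R)$ is generated by $\mathbf q R(-i)$ and the image of $\Phi_i$. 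Here the hypothesis that $R$ is generated in degree $1$ over $R_0=k$ is what makes the truncation triangles have the right shape.

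The main obstacle I anticipate is the fully faithfulness of $\Phi_i$: one must show that the $\Hom$-limit defining morphisms in $\qgr R$ actually stabilises and equals the stable $\Hom$ in $\underline{\CM}^{\gr}R$, which requires both the Serre-type vanishing $\lim_{p}\Ext^{>0}_{\gr R}(M_{\geq p},N)=0$ (needing Noetherianity) and a careful bookkeeping of how the complete resolution's truncation represents the object — in particular checking that passing to $\qgr R$ kills precisely the perfect part and nothing more. Once this is in place, semi-orthogonality and generation are comparatively formal, following the template of \cite[\S2]{Orl05} verbatim.
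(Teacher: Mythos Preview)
The paper does not itself prove Theorem~\ref{thmOrlov}; it is quoted from \cite{Orl05}. However, inside the proof of Theorem~\ref{mainresult} the paper recalls Orlov's construction of $\Phi_i$, and that construction is rather different from yours. Orlov's route is purely category-theoretic and takes place entirely in $\Db(\gr R)$: one establishes the two semi-orthogonal decompositions
\[
\Db(\gr R)=\langle \Db(\gr_{\geq i} R),\Pp_{<i}\rangle,\qquad \Db(\gr_{\geq i} R)=\langle \Pp_{\geq i}, {}^\perp\Pp_{\geq i}\rangle,
\]
shows that the Verdier quotient $\Db(\gr R)/\Pp$ is equivalent to $\Db(\gr_{\geq i}R)/\Pp_{\geq i}\simeq {}^\perp\Pp_{\geq i}$, and then defines $\Phi_i$ as the composite ${}^\perp\Pp_{\geq i}\hookrightarrow \Db(\gr R)\xrightarrow{\mathbf{q}}\Db(\qgr R)$. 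Fully faithfulness of $\Phi_i$ and the semi-orthogonal decomposition of $\Db(\qgr R)$ are then deduced from further manipulations with admissible subcategories. Neither Buchweitz's equivalence nor complete resolutions enter.

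Your approach via $\underline{\CM}^{\gr}R$ and truncated complete resolutions is a genuinely different strategy. It can be made to work, but your sketch has a real gap at the fully-faithfulness step. You assert that the limit computing $\Hom_{\Db(\qgr R)}(\Phi_i M,\Phi_i N)$ collapses to the \emph{stable} $\Hom$, but this is not merely a vanishing statement: you must identify exactly which morphisms are killed when passing to $\qgr R$ and show these coincide with those factoring through graded projectives, and it is precisely here that the Gorenstein parameter being $+1$ (rather than arbitrary) must be used. Your outline does not address this. A second, smaller issue: your description of $\Phi_i$ as ``stupid truncation at the degree-$i$ spot'' conflates the internal grading with the homological one and does not pin down the functor; in Orlov's language the lift of $\pi X$ to ${}^\perp\Pp_{\geq i}$ is produced by two successive triangles, not by a single truncation. (Also, complete resolutions over a general Gorenstein ring are totally acyclic but not $2$-periodic outside the hypersurface case, so that parenthetical should be dropped.) Orlov's argument sidesteps all of this by building $\Phi_i$ as an inclusion of an admissible subcategory, where fully faithfulness is automatic once the decompositions of $\Db(\gr R)$ are in hand.
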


The aim of this section is to prove that the recollement produced in Corollary~\ref{correcollement} is actually induced by one of the semi-orthogonal decomposition above.

\begin{theorem}\label{mainresult}
Let $\Lambda$ be a $d$-representation infinite algebra such that its preprojective algebra $\Pi$ is Noetherian. Assume that there exists an idempotent $e$ such that 
\begin{itemize}
\item[(a)] $\dim_k \underline{\Pi}<\infty$, where $\underline{\Pi}:=\Pi/\Pi e \Pi$;
\item[(b)] $e\Lambda (1-e)=0$;
\item[(c)] $e\Lambda e\simeq k$.
\end{itemize}
Then the graded algebra $R:=e\Pi e$ satisfies the hypothesis of Theorem~\ref{thmOrlov} and we have a commutative diagram
\begin{equation}\label{diagcomm}\xymatrix{\Db(\mod\underline{\Lambda})\ar[r]^-{\rm Res}\ar[d]_{\pi(-\lten_\Lambda\Pi e)} & \Db(\mod \Lambda)\ar[d]^{\mathbf{q}(-\lten_\Lambda\Pi e)}\\ \Sing^{\gr}(R)\ar[r]^{\Phi_0} & \Db(\qgr R)}.\end{equation}
\end{theorem}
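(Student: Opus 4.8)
The plan is to verify the two assertions of the theorem in turn: first that $R=e\Pi e$ satisfies the hypotheses of Theorem~\ref{thmOrlov}, and then that the square \eqref{diagcomm} commutes. For the first point, note that $\Pi$ is Noetherian by assumption and hypothesis (a) forces $R=e\Pi e$ to be Noetherian as well (since $\Pi$ and $e\Pi e$ have the same $\qgr$, and more concretely $\Pi e$ is a finitely generated $e\Pi e$-module modulo a finite dimensional piece). The degree zero part is $R_0=e\Pi_0 e=e\Lambda e\simeq k$ by (c), so $R$ has the required form $k\oplus R_1\oplus\cdots$. The Gorenstein parameter $1$ property should be transported from $\Pi$: by Remark~\ref{remarkpreprojCY}, $\Pi$ is bimodule $(d+1)$-Calabi--Yau of Gorenstein parameter $1$, and one deduces the corresponding statement $\RHom_R(k,R)[d+1]\simeq k(1)$ for $R=e\Pi e$ by applying $e(-)e$ to the bimodule resolution and using hypothesis (a) to control the torsion. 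This is essentially the computation already invoked in the proof of Corollary~\ref{correcollement} for the equivalence $\qgr\Pi\simeq\qgr R$, pushed one step further to the Gorenstein statement.

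For the commutativity of \eqref{diagcomm}, the strategy is to identify $\Phi_0$ explicitly on the image of $\pi(-\lten_\Lambda\Pi e)$. Recall from Theorem~\ref{thmAIR} that $\pi(-\lten_\Lambda\Pi e)\circ\mathrm{Res}$ is a triangle equivalence $\Db(\mod\underline\Lambda)\to\Sing^{\gr}(R)$, and from Theorem~\ref{thmMinamoto} that $\mathbf{q}(-\lten_\Lambda\Pi)$ is an equivalence $\Db(\mod\Lambda)\to\Db(\qgr\Pi)$; composing the latter with the equivalence $\qgr\Pi\simeq\qgr R$ of Corollary~\ref{correcollement} gives an equivalence $\Db(\mod\Lambda)\to\Db(\qgr R)$ which should be identified with $\mathbf{q}(-\lten_\Lambda\Pi e)$. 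So both vertical arrows of \eqref{diagcomm} are (up to the identifications) known equivalences, and the content is that the composite $\mathbf{q}(-\lten_\Lambda\Pi e)\circ\mathrm{Res}$ agrees with $\Phi_0$ composed with the singularity-category equivalence. Since $\Phi_0$ is characterized, in Orlov's construction, as the section of the localization functor $\pi:\Db(\gr R)\to\Sing^{\gr}(R)$ whose image is right orthogonal to $\mathbf{q}R$ in $\Db(\qgr R)$, it suffices to check: (i) that for $X\in\Db(\mod\underline\Lambda)$, the object $\mathbf{q}(X\lten_\Lambda\Pi e)$ lifts $\pi(X\lten_\Lambda\Pi e)$ through $\mathbf{q}:\Db(\gr R)\to\Db(\qgr R)$ composed with $\pi$ — equivalently that applying $\pi$ to the graded module $X\lten_\Lambda\Pi e$ and then $\mathbf q$ gives $\Phi_0$ of its class; and (ii) the semi-orthogonality $\Hom_{\Db(\qgr R)}(\mathbf qR,\,\mathbf q(X\lten_\Lambda\Pi e)[j])=0$ for all $j$. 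Point (ii) is where the hypotheses bite: $\mathbf qR$ corresponds under Minamoto's equivalence to $\mathbf q(\Lambda\lten_\Lambda\Pi)=\mathbf q\Pi$, hmm — more precisely to the image of the projective $e\Lambda e\simeq k$-ish part — and the vanishing should follow from the fact that $X$ is a $\underline\Lambda=\Lambda/\Lambda e\Lambda$-module, combined with hypothesis (b) which makes $\mathrm{Res}$ fully faithful and, more to the point, makes $\Hom_{\Db(\mod\Lambda)}(\Lambda e, \mathrm{Res}(X)[j])=0$.

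Concretely, I would compute $\Hom_{\Db(\qgr R)}(\mathbf qR, \mathbf q(X\lten_\Lambda\Pi e)[j])$ by transporting along Minamoto's equivalence: $\mathbf qR=\mathbf q(e\Pi)$ corresponds to $e\Lambda e$-related data, but the cleanest route is to observe that $\mathbf{q}(-\lten_\Lambda \Pi e)$ sends the projective $\Lambda$-module $e\Lambda$ to $\mathbf qR$, so the required orthogonality becomes $\Hom_{\Db(\mod\Lambda)}(e\Lambda,\,\mathrm{Res}(X)[j])\simeq H^j(\mathrm{Res}(X)e)=H^j(Xe)$, which vanishes for all $j$ because $X$ is a module over $\underline\Lambda=\Lambda/\Lambda e\Lambda$ so $Xe=X\ten_\Lambda\Lambda e=X\ten_{\underline\Lambda}\underline\Lambda\ten_\Lambda\Lambda e$ and $\underline\Lambda\ten_\Lambda \Lambda e=\underline\Lambda e=0$. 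This handles (ii). For (i), I would use the commutative diagram \eqref{diagcommAIR} (respectively \eqref{diagcommMinamoto}) to reduce to checking the lift on objects of $\mod\underline\Lambda$ in low degree and then invoke the compatibility with the shift/Serre functors; alternatively, one characterizes $\Phi_0$ as the unique fully faithful section with image $^\perp(\mathbf qR)$ up to shift and appeals to uniqueness, so that once (ii) is established together with the fact that $\mathbf q(-\lten_\Lambda\Pi e)\circ\mathrm{Res}$ is fully faithful with the right essential image (this follows by combining Theorems~\ref{thmAIR} and~\ref{thmMinamoto} and Corollary~\ref{correcollement}), commutativity is forced. The main obstacle I anticipate is bookkeeping the identification of the various functors — in particular pinning down that Minamoto's $\mathbf q(-\lten_\Lambda\Pi)$ followed by $(-)e:\qgr\Pi\to\qgr R$ is naturally isomorphic to $\mathbf q(-\lten_\Lambda\Pi e)$, and matching Orlov's internal description of $\Phi_0$ (via truncations of graded modules) with the image of $-\lten_\Lambda\Pi e$; both are natural but require care with degree conventions and with the passage between $\Db(\gr R)$, $\Sing^{\gr}(R)$ and $\Db(\qgr R)$.
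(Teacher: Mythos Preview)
Your verification that $R$ satisfies the hypotheses of Theorem~\ref{thmOrlov} is on the right track, though the Noetherianity argument via $\qgr$ is unnecessarily indirect: the paper simply observes that a strictly ascending chain $I_1\subsetneq I_2\subsetneq\cdots$ of right ideals in $R=e\Pi e$ yields a strictly ascending chain $I_1\Pi\subsetneq I_2\Pi\subsetneq\cdots$ in $\Pi$, contradicting Noetherianity of $\Pi$. For the Gorenstein property the paper invokes \cite[Remark~2.7]{AIR11} to pass from the bimodule Calabi--Yau property of $\Pi$ to $\RHom_{R^{\rm e}}(R,R^{\rm e})[d+1]\simeq R(1)$, then applies $-\lten_R k$; your sketch is compatible with this.

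The genuine gap is in the commutativity argument, and it is an orientation error. In Orlov's semi-orthogonal decomposition $\Db(\qgr R)=\langle \mathbf{q}R,\ \Phi_0(\Sing^{\gr}R)\rangle$ the vanishing is $\Hom\bigl(\Phi_0(\Sing^{\gr}R),\,\mathbf{q}R[j]\bigr)=0$, i.e.\ the image of $\Phi_0$ is the \emph{left} orthogonal ${}^\perp(\mathbf{q}R)$. You write ${}^\perp(\mathbf{q}R)$ at one point, but the computation you actually carry out establishes the opposite vanishing $\Hom_{\Db(\qgr R)}(\mathbf{q}R,\,\mathbf{q}(X\lten_\Lambda\Pi e)[j])\simeq H^j(Xe)=0$. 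This is true but irrelevant: it does not place $\mathbf{q}(X\lten_\Lambda\Pi e)$ in the image of $\Phi_0$. What you would need is $\Hom_{\Db(\mod\Lambda)}(X,\,e\Lambda[j])=0$, and checking this for all $j$ via Minamoto's equivalence is not as immediate as the computation you wrote down.

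Even with the correct orthogonality in hand, your ``uniqueness'' step is incomplete: knowing that $\mathbf{q}(X\lten_\Lambda\Pi e)$ lies in the essential image of $\Phi_0$ does not by itself identify its preimage with $\pi(X\lten_\Lambda\Pi e)$. The paper avoids this entirely by working one level up, in $\Db(\gr R)$ rather than $\Db(\qgr R)$. Orlov's construction gives the explicit description $\Phi_0(\pi Y)=\mathbf{q}(Y)$ whenever $Y\in{}^\perp\Pp_{\geq 0}\subset\Db(\gr R)$, where ${}^\perp\Pp_{\geq 0}=\{Y:\Hom_{\Db(\gr R)}(Y,R(\ell)[j])=0\text{ for all }\ell\leq 0,\ j\in\mathbb{Z}\}$. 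The paper then shows directly that $(1-e)\Pi e\in{}^\perp\Pp_{\geq 0}$: the case $\ell<0$ is immediate from positive grading, and for $\ell=0$ one computes $\Hom_{\gr R}((1-e)\Pi e,\,e\Pi e)\simeq (e\Pi(1-e))_0=e\Lambda(1-e)=0$ using hypothesis~(b). Since $X\lten_\Lambda\Pi e$ lies in the thick subcategory generated by $(1-e)\Pi e$ for any $X\in\Db(\mod\underline{\Lambda})$, one obtains $\Phi_0\bigl(\pi(X\lten_\Lambda\Pi e)\bigr)=\mathbf{q}(X\lten_\Lambda\Pi e)$ on the nose, which is exactly the commutativity of \eqref{diagcomm}. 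This is both shorter and avoids the bookkeeping you flagged as an obstacle.
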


\begin{proof}
The algebra $R$ is Noetherian since for any right (resp. left) ideal $I$ of $R$, $I\Pi$ (resp. $\Pi I$) is a right (resp. left) ideal of $\Pi$. Moreover if $I_1\subset \ldots \subset I_\ell$ is a striclty ascending chain of ideals in $R$, then so are $I_1\Pi\subset\ldots\subset I_\ell\Pi$ and $\Pi I_1\subset\ldots\subset \Pi I_\ell$. Hence the noetherianity of $\Pi$ implies the noetherianity of $R$.
  
First of all we have $R_0=e\Pi_0 e=e\Lambda e=k$.

Now we prove that $R$ is Gorenstein of parameter $1$. By Remark~\ref{remarkpreprojCY}, the algebra $\Pi$ is bimodule $(d+1)$-Calabi-Yau of Gorenstein parameter $1$. Then by \cite[Remark 2.7]{AIR11}, we deduce an isomorphism 
\[\RHom_{R^{\rm e}}(R,R^{\rm e})[d+1]\simeq R(1) \quad \textrm{in }\Dd(\gr R^{\rm e}).\]
Applying the functor $-\lten_{R}R_0=\lten_R k$ we obtain the following isomorphisms in $\Dd(\gr R^{\rm op}\ten R_0)=\Dd(\gr R^{\rm op})$:
\begin{align*}k(1)[-d-1] &\simeq \RHom_{R^{\rm e}}(R,R^{\rm e})\lten_R R_0\\
 & \simeq \RHom_{R_0^{\rm op}\ten R}(R_0,R_0^{\rm op}\ten R)\\ & \simeq \RHom_R(k,R).
\end{align*}
Therefore we are in the setup of Theorem~\ref{thmOrlov}. In order to check that the diagram \eqref{diagcomm} is commutative, we have to recall the construction of the functors $\Phi_i$. We use the same notation as in \cite{Orl05} that we recall here for the convenience of the reader. 

For $i\in \mathbb{Z}$ we denote by $\gr_{\geq i} R$ the full subcategory of $\gr R$ which consists of all modules $M\in\gr R$ such that $M_p=0$ when $p<i$. We denote by $\Pp$ the category $\Db(\gr\proj R)$, and by $\Pp_{\geq i}$ the full subcategory of $\Pp$ generated by the free modules $R(p)$ with $p\geq -i$. 

Orlov proves the existence of the following semi-orthogonal decompositions (Lemmas 2.3 and 2.4 in \cite{Orl05})
\[\Db(\gr R)=\langle \Db(\gr_{\geq i} R),\Pp_{<i}\rangle, \quad \Db(\gr_{\geq i} R)=\langle \Pp_{\geq i}, ^\perp\Pp_{\geq i}\rangle\] and proves the equivalence $\Dd(\gr R)/\Pp\simeq \Db(\gr_{\geq i} R)/\Pp_{\geq i}$.
 Then the funcotr $\Phi_i$ is given by the following composition
\[ \xymatrix{\Sing^{\gr}(R)=\Dd(\gr R)/\Pp\ar[r]^-\sim &\Db(\gr_{\geq i} R)/\Pp_{\geq i}\simeq ^\perp \Pp_{\geq i} \ar@{^(->}[r]& \Db(\gr R)\ar[r]^{\mathbf{q}} & \Db(\qgr R)}.\]

More precisely, if $X\in\Db(\gr R)$ then there exists a triangle $$\xymatrix{Y\ar[r] & X\ar[r]& X_1\ar[r] &Y[1]}$$ in $\Db(\gr R)$ with $Y\in \Pp_{<i}$ and $X_1\in \Db(\gr_{\geq i} R)$. Since $\Pp_{< i}\subset \Pp$, $X$ and $X_1$ are isomorphic in the singularity category. Then there exists a triangle $$\xymatrix{X_2\ar[r] & X_1\ar[r] & Z\ar[r]& X_2[1]}$$ where $Z\in\Pp_{\geq i}$ and $X_2\in ^\perp\Pp_{\geq i}$. The objects $X$ and $X_2$ are isomorphic in the singularity category. Then $\Phi_i(\pi X)$ is defined to be $\mathbf{q}(X_2)$. In particular if $X\in^\perp\Pp_{\geq i}$ then $\Phi_i\circ \pi(X)\simeq \mathbf{q}(X)$.

Now let $X\in \Db(\mod \Lambda/\Lambda e \Lambda)$. The object $X\lten_\Lambda \Pi e$ belongs to the thick subcategory of $\Db(\gr R)$ generated by $(1-e)\Pi e$. For $\ell<0$, the space $\Hom_{\gr R}((1-e)\Pi e, R(\ell))$ clearly vanishes since $\Pi$ hence $R$ are positively graded. Moreover we have 
\begin{align*}\Hom_{\gr R}((1-e)\Pi e, R) & = \Hom_{\gr R}((1-e)\Pi e, e\Pi e)  \\ & \simeq (e\Pi (1-e))_0 \\ & =e\Lambda (1-e)=0 \quad\textrm{by assumption (b).}\end{align*}   
This means that $(1-e)\Pi e$ belongs to the category \[^\perp\Pp_{\geq 0}=\{M\in \Db(\gr R)\mid \Hom_{\Db(\gr R)}(M, R(\ell)), \forall \ell\leq 0  \}\] and so does $X\lten_\Lambda \Pi e$. Therefore $\Phi_0\circ \pi (X\lten_\Lambda \Pi e)=\mathbf{q}(X\lten_{\Lambda}\Pi e)$ and the diagram \eqref{diagcomm} is commutative.

\end{proof}

The functor $\Phi_0$ does not commute with the degree shift. Using diagrams \eqref{diagcommMinamoto} and \eqref{diagcommAIR}, we can deduce what is the degree shift action of the category $\Sing^{\gr}(R)$ inside the category $\Db(\qgr R)$. More precisely we recover the following result.

\begin{corollary}\cite[Lemma 5.2.1]{KMV08}
In the setup of Theorem~\ref{mainresult}, let $M$ be an object in $\Sing^{\gr}(R)$. Then we have 
\[\Phi_0(M(1))\simeq {\sf Cone}(\RHom_{\qgr R}(\mathbf{q}R,\Phi_0(M)(1))\to \Phi_0(M)(1)).\]
\end{corollary}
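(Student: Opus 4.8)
The plan is to transport the degree-shift computation from $\Db(\mod\underline\Lambda)$ (where it is controlled by the recollement of Corollary~\ref{correcollement}) through the commutative square \eqref{diagcomm}, and to identify the outcome with the cone formula using the semi-orthogonal decomposition $\Db(\qgr R)=\langle \mathbf{q}R, \Phi_0(\Sing^{\gr}(R))\rangle$ of Theorem~\ref{thmOrlov}. First I would fix $M\in\Sing^{\gr}(R)$ and, via the equivalence $\pi(-\lten_\Lambda\Pi e)$ of Theorem~\ref{thmAIR}, write $M=\pi(X\lten_\Lambda\Pi e)$ for a unique (up to isomorphism) $X\in\Db(\mod\underline\Lambda)$. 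By the commutative diagram \eqref{diagcommAIR}, $M(1)\simeq \pi((\underline\SSS_d^{-1}X)\lten_\Lambda\Pi e)$, and by \eqref{diagcomm}, $\Phi_0(M(1))\simeq \mathbf{q}((\underline\SSS_d^{-1}X)\lten_\Lambda\Pi e)$ while $\Phi_0(M)\simeq\mathbf{q}(X\lten_\Lambda\Pi e)$. So everything reduces to understanding $\underline\SSS_d^{-1}X$ relative to $X$ after tensoring up and applying $\mathbf{q}$.

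Next I would bring in the recollement $\Db(\mod\underline\Lambda)\hookrightarrow\Db(\mod\Lambda)\to\Db(\mod e\Lambda e)$ from the proof of Corollary~\ref{correcollement}. Since $e\Lambda e\simeq k$ by hypothesis (c), the right-hand term is $\Db(\mod k)=\Db(\mathrm{vect}\,k)$, and the recollement gluing triangle for an object $N\in\Db(\mod\Lambda)$ takes the concrete shape $j_!j^*N\to N\to i_*i^*N\to$, where $j_!j^*N$ is built from copies of $\Lambda e\lten_{e\Lambda e}e\Lambda N\simeq \Lambda e\ten_k eN$. Apply this to $N=\SSS_d^{-1}(X)$ computed in $\Db(\mod\Lambda)$ (using $\mathrm{Res}$): by the compatibility of $\SSS_d$ with $\underline\SSS_d$ — which is exactly the content that makes \eqref{diagcommAIR} and \eqref{diagcommMinamoto} coherent — the object $\underline\SSS_d^{-1}X$, viewed in $\Db(\mod\Lambda)$, is the $i_*$-part (the part killed by $j^*$) of $\SSS_d^{-1}X$, so there is a triangle
\[
\xymatrix{U\ar[r] & \SSS_d^{-1}X\ar[r] & \underline\SSS_d^{-1}X\ar[r] & U[1]}
\]
in $\Db(\mod\Lambda)$ with $U\in\mathrm{Im}(j_!)$ a complex of sums of $\Lambda e$'s. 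Now apply $\mathbf{q}(-\lten_\Lambda\Pi e)$. On the one hand $\mathbf{q}((\SSS_d^{-1}X)\lten_\Lambda\Pi e)\simeq\mathbf{q}(X\lten_\Lambda\Pi e)(1)=\Phi_0(M)(1)$ by \eqref{diagcommMinamoto}; on the other hand $\mathbf{q}(\Lambda e\lten_\Lambda\Pi e)=\mathbf{q}(e\Pi e)=\mathbf{q}R$, so $\mathbf{q}(U\lten_\Lambda\Pi e)$ lies in the triangulated subcategory generated by $\mathbf{q}R$. The triangle thus reads
\[
\xymatrix{\mathbf{q}(U\lten_\Lambda\Pi e)\ar[r] & \Phi_0(M)(1)\ar[r] & \Phi_0(M(1))\ar[r] & {}}
\]
and it remains only to identify the first term as $\RHom_{\qgr R}(\mathbf{q}R,\Phi_0(M)(1))\ten\mathbf{q}R$ (the universal extension), equivalently to show the above is the canonical decomposition triangle of $\Phi_0(M)(1)$ with respect to $\langle\mathbf{q}R,\Phi_0(\Sing^{\gr}R)\rangle$, so that $\Phi_0(M(1))\simeq{\sf Cone}(\RHom_{\qgr R}(\mathbf{q}R,\Phi_0(M)(1))\to\Phi_0(M)(1))$ as claimed.

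For that last identification I would check the two defining orthogonality conditions of the semi-orthogonal decomposition: that $\mathbf{q}(U\lten_\Lambda\Pi e)$ belongs to $\mathrm{add}$ of shifts of $\mathbf{q}R$ (clear from $U\in\mathrm{Im}\,j_!$ and $\mathbf{q}(\Lambda e\lten_\Lambda\Pi e)=\mathbf{q}R$), and that $\Phi_0(M(1))=\mathbf{q}((\underline\SSS_d^{-1}X)\lten_\Lambda\Pi e)$ lies in $\Phi_0(\Sing^{\gr}R)$, which is automatic since $\underline\SSS_d^{-1}X\in\Db(\mod\underline\Lambda)$ and we just showed in the proof of Theorem~\ref{mainresult} that $\Phi_0\circ\pi(Y\lten_\Lambda\Pi e)=\mathbf{q}(Y\lten_\Lambda\Pi e)$ for all $Y\in\Db(\mod\underline\Lambda)$. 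Uniqueness of the SOD triangle then forces the cone description, and the counit map $\RHom_{\qgr R}(\mathbf{q}R,\Phi_0(M)(1))\to\Phi_0(M)(1)$ is precisely the map $\mathbf{q}(U\lten_\Lambda\Pi e)\to\Phi_0(M)(1)$. The main obstacle I anticipate is the second step — pinning down precisely why $\underline\SSS_d^{-1}X$ (as an object of $\Db(\mod\Lambda)$ via $\mathrm{Res}$) sits in the triangle above with its $j_!j^*$-part $U$ mapping, after $\mathbf{q}(-\lten_\Lambda\Pi e)$, onto the $\mathbf{q}R$-component; this requires carefully combining the definition of $\underline\SSS_d=-\lten_{\underline\Lambda}D\underline\Lambda[-d]$ with $D\Lambda$, the recollement functors, and condition (b), rather than any single cited black box.
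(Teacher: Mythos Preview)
Your approach is essentially the paper's, recast in recollement language: both reduce via diagrams \eqref{diagcomm}, \eqref{diagcommMinamoto}, \eqref{diagcommAIR} to producing, for $X\in\Db(\mod\underline\Lambda)$, a triangle $U\to\SSS_d^{-1}X\to\underline\SSS_d^{-1}X\to$ in $\Db(\mod\Lambda)$ with $U$ built from copies of $e\Lambda$, and then pushing through $\mathbf{q}(-\lten_\Lambda\Pi e)$.

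Two remarks. First, the justification you give for that triangle --- ``the compatibility of $\SSS_d$ with $\underline\SSS_d$ which makes \eqref{diagcommAIR} and \eqref{diagcommMinamoto} coherent'' --- does not suffice: those diagrams only compare the functors \emph{after} passing to $\Sing^{\gr}$ or $\qgr$, so they say nothing about $\underline\SSS_d^{-1}X$ versus $\SSS_d^{-1}X$ inside $\Db(\mod\Lambda)$ itself. The paper establishes the triangle directly: writing $\theta$ and $\underline\theta$ for projective resolutions of $\RHom_\Lambda(D\Lambda,\Lambda)$ and $\RHom_{\underline\Lambda}(D\underline\Lambda,\underline\Lambda)$, it invokes \cite[Lemma~4.2]{AIR11} for the isomorphism $\underline\theta\simeq\underline\Lambda\ten_\Lambda\theta\ten_\Lambda\underline\Lambda$, so that $\underline\SSS_d^{-1}X\simeq X\ten_\Lambda\theta\ten_\Lambda\underline\Lambda[d]\simeq(\SSS_d^{-1}X)\lten_\Lambda\underline\Lambda$; the short exact sequence $\Lambda e\Lambda\hookrightarrow\Lambda\twoheadrightarrow\underline\Lambda$ then yields exactly your triangle with $U=(\SSS_d^{-1}X)\ten_\Lambda\Lambda e\Lambda$. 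This is the ``careful combination'' you anticipate, and it does rest on a cited black box. Second, for the final identification the paper does not appeal to uniqueness of the semi-orthogonal decomposition triangle as you do; it simply rewrites $(\SSS_d^{-1}X)\ten_\Lambda\Lambda e\Lambda\simeq\RHom_\Lambda(e\Lambda,\SSS_d^{-1}X)\ten_k e\Lambda$ (using $\Lambda e\Lambda\simeq\Lambda e\ten_{e\Lambda e}e\Lambda$ and $e\Lambda e\simeq k$) and translates term by term. Your uniqueness argument is a legitimate alternative and arguably cleaner.
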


\begin{proof}
Using the diagrams \eqref{diagcomm}, \eqref{diagcommMinamoto} and \eqref{diagcommAIR}, it is enough to understand the action of the functor $\underline{\SSS}_d$ inside the category $\Db(\mod \Lambda)$. 
We denote by $\theta$ a projective resolution of the object $\RHom_{\Lambda}(D\Lambda,\Lambda)$ in $\Db(\mod \Lambda^{\rm e})$, and by $\underline{\theta}$ a projective resolution of the object $\RHom_{\underline{\Lambda}}(D\underline{\Lambda},\underline{\Lambda})$ in $\Db(\mod \underline{\Lambda}^{\rm e})$. Then for $N\in\Db(\mod\underline{\Lambda})$ we have the following isomorphisms in $\Db(\mod \Lambda)$
\begin{align*}
\underline{\SSS}^{-1}_dN & \simeq N\ten_{\underline{\Lambda}}\underline{\theta}[d] \\
 & \simeq N\ten_{\underline{\Lambda}}\underline{\Lambda}\ten_{\Lambda}\theta\ten_{\Lambda}\underline{\Lambda}[d] \quad \textrm{by \cite[Lemma 4.2]{AIR11}}\\ &\simeq N\ten_{\Lambda}\theta\ten_{\Lambda}\underline{\Lambda}[d]\\
&\simeq {\sf Cone}(N\ten_{\Lambda}\theta\ten_{\Lambda}\Lambda e\Lambda \to N\ten_{\Lambda}\theta)[d]\quad \textrm{induced by the inclusion }\Lambda e\Lambda\to\Lambda\\
 &\simeq {\sf Cone}(\SSS_d^{-1} N\ten_{\Lambda}\Lambda e \Lambda\to\SSS_d^{-1}N)\\
&\simeq {\sf Cone }(\RHom_{\Lambda}(e\Lambda,\SSS_d^{-1}N)\ten_{k}e\Lambda\to \SSS_d^{-1}N) 
\end{align*}
Now, we can translate this isomorphism using the commutative diagrams \eqref{diagcomm}, \eqref{diagcommMinamoto} and \eqref{diagcommAIR}. And we obtain for any $M\in\Sing^{\gr}(R)$
\begin{align*}
\Phi_0(M(1)) &\simeq {\sf Cone}(\RHom_{\qgr R}(\mathbf{q}(e\Lambda\lten_{\Lambda}\Pi e),\Phi_0(M)(1))\ten_k \mathbf{q}(e\Lambda\lten_{\Lambda}\Pi e)\to \Phi_0(M)(1))\\ & \simeq {\sf Cone}(\RHom_{\qgr R}(\mathbf{q}R,\Phi_0(M)(1))\ten_k \mathbf{q}R\to \Phi_0(M)(1)).
\end{align*}
\end{proof}

Using the previous description, it is also possible to describe for any $i\in\mathbb{Z}$ the functors $\Phi_i$ defined by Orlov in terms of the categories $\Db(\mod \Lambda)$ and $\Db(\mod \underline{\Lambda})$.
\begin{corollary}
Under hypothesis and notations of Theorem~\ref{mainresult}, for any $i\in\mathbb{Z}$ there is a commutative diagram
\[\xymatrix{\Db(\mod\underline{\Lambda})\ar[rr]^-{\SSS_d^{-i}\circ{\rm Res}\circ \underline{\SSS}_d^{i}}\ar[d]_{F}& & \Db(\mod \Lambda)\ar[d]^{G}\\ \Sing^{\gr}(R)\ar[rr]^{\Phi_i} && \Db(\qgr R)}\]
\end{corollary}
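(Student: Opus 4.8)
The plan is to reduce the statement to the already-established commutative diagram \eqref{diagcomm} together with the two ``twisting'' diagrams \eqref{diagcommMinamoto} and \eqref{diagcommAIR}, by tracking how Orlov's functors $\Phi_i$ for varying $i$ are related to $\Phi_0$. The key observation I would use is that, in Orlov's construction, the functors $\Phi_i$ and $\Phi_{i+1}$ differ by a twist: concretely, one has an isomorphism of functors $\Phi_{i-1}\simeq (1)\circ\Phi_i\circ\langle\text{degree shift}\rangle^{-1}$ on $\Sing^{\gr}(R)$, reflecting the fact that the semi-orthogonal decomposition $\Db(\qgr R)=\langle\mathbf{q}R(-i),\Phi_i(\Sing^{\gr}(R))\rangle$ is obtained from the one for $i-1$ by applying the autoequivalence $(1)$ of $\Db(\qgr R)$, which sends $\mathbf{q}R(-i)$ to $\mathbf{q}R(-i+1)$. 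Iterating, $\Phi_i\simeq (i)\circ\Phi_0\circ\langle\text{shift}\rangle^{-i}$, where the shift on $\Sing^{\gr}(R)$ is the degree-shift autoequivalence $(1)$ of that category.

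Granting this, I would define $F$ and $G$ by the formulas forced by the diagram: take $F:=\pi(-\lten_\Lambda\Pi e)$ precomposed with nothing new (it is already the vertical map of \eqref{diagcomm}), but since $\Phi_i$ involves $i$ applications of the degree shift on $\Sing^{\gr}(R)$, and \eqref{diagcommAIR} identifies that shift with $\underline{\SSS}_d^{-1}$ on $\Db(\mod\underline{\Lambda})$, the correct choice is $F:=\pi(-\lten_\Lambda\Pi e)$ (unchanged) while the top horizontal map absorbs the $\underline{\SSS}_d^{i}$ on the source and the $\SSS_d^{-i}$ on the target; similarly $G:=\mathbf{q}(-\lten_\Lambda\Pi e)$, because \eqref{diagcommMinamoto} identifies the degree shift $(i)$ on $\Db(\qgr R)$ with $\SSS_d^{-i}$ on $\Db(\mod\Lambda)$. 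Then the desired diagram is obtained by pasting: start from $\Db(\mod\underline{\Lambda})$, apply $\underline{\SSS}_d^{i}$ (vertical, using \eqref{diagcommAIR} to move past $F$ as $(i)$ on $\Sing$), then $\mathrm{Res}$ (using \eqref{diagcomm}), then $\SSS_d^{-i}$ (vertical, using \eqref{diagcommMinamoto} to move past $G$ as $(i)$ on $\Db(\qgr R)$), and compare with $\Phi_i\simeq (i)\circ\Phi_0\circ(-i)$ on the $\Sing$ side.

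The main obstacle I expect is making precise, with compatible isomorphisms of \emph{functors} (not merely of objects), the claim that $\Phi_i\simeq (i)\circ\Phi_0\circ(\text{shift})^{-i}$. Orlov's $\Phi_i$ is defined via a sequence of triangles depending on $i$ (the decompositions $\Db(\gr R)=\langle\Db(\gr_{\geq i}R),\Pp_{<i}\rangle$ and $\Db(\gr_{\geq i}R)=\langle\Pp_{\geq i},{}^\perp\Pp_{\geq i}\rangle$), and one must check that the degree-shift autoequivalence $(1)$ of $\Db(\gr R)$ carries $\gr_{\geq i}R$ to $\gr_{\geq i-1}R$, $\Pp_{\geq i}$ to $\Pp_{\geq i-1}$, and ${}^\perp\Pp_{\geq i}$ to ${}^\perp\Pp_{\geq i-1}$, hence intertwines the two truncation procedures up to a coherent isomorphism. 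This is essentially bookkeeping with Orlov's lemmas, but it is the step where care is needed; alternatively one can cite \cite{Orl05} directly for the relation between the $\Phi_i$. Once that is in hand, the pasting of \eqref{diagcomm}, \eqref{diagcommMinamoto} and \eqref{diagcommAIR} is formal, and the stated diagram commutes with $F=\pi(-\lten_\Lambda\Pi e)$ and $G=\mathbf{q}(-\lten_\Lambda\Pi e)$.
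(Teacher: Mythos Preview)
Your approach is essentially the same as the paper's: reduce to the identity $\Phi_i(M)\simeq \Phi_0(M(-i))(i)$, verify it by observing that the degree shift $(-i)$ carries ${}^\perp\Pp_{\geq i}$ to ${}^\perp\Pp_{\geq 0}$ (so that $\Phi_0(\pi N(-i))(i)=\mathbf{q}(N(-i))(i)=\mathbf{q}(N)=\Phi_i(\pi N)$ for $N\in{}^\perp\Pp_{\geq i}$), and then paste with \eqref{diagcomm}, \eqref{diagcommMinamoto}, \eqref{diagcommAIR} taking $F=\pi(-\lten_\Lambda\Pi e)$ and $G=\mathbf{q}(-\lten_\Lambda\Pi e)$. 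One small slip: your intermediate relation should read $\Phi_{i+1}\simeq(1)\circ\Phi_i\circ(-1)$ rather than $\Phi_{i-1}$, but the iterated formula $\Phi_i\simeq(i)\circ\Phi_0\circ(-i)$ you arrive at is the correct one and is exactly what the paper checks.
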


\begin{proof}
Using the commutativity of the diagrams \eqref{diagcomm}, \eqref{diagcommMinamoto} and \eqref{diagcommAIR}, it is enough to check that for any $M\in \Sing^{\gr}(R)$ we have $\Phi_i(M)\simeq \Phi_0(M(-i))(i)$. Indeed one immediately checks that if $N\in ^\perp\Pp_{\geq i}$, then $N(-i)\in ^\perp\Pp_{\geq 0}$, so $$\Phi_0(\pi N (-i))(i)=\mathbf{q}(N(-i))(i)=\mathbf{q}(N)=\Phi_i(\pi N).$$
\end{proof}

\section{Examples}\label{section4}

In this section, $d\geq 1$ is an integer and we denote by $S$ the polynomial algebra $S=k[X_0,\ldots,X_d]$.

Let $\Lambda$ be the $d$-Beilinson algebra, that is the algebra presented by the quiver 
\[\scalebox{0.5}{
\begin{tikzpicture}[>=stealth, scale=1]
\node (P1) at (0,0){$0$}; \node (P2) at (5,0){$1$}; \node (P3) at
(10,0){$2$};

\node(P6) at (20,0){$d-1$}; \node(P7) at (25,0){$d$};  

\draw [->] (0.3,0.3)--node[fill=white,inner sep=.5mm]{$x_{0,0}$}(4.7,0.3);
\draw [->] (0.3,0)--node[fill=white,inner sep=.5mm,]{$x_{1,0}$}(4.7,0);
\draw [->] (0.3,-0.6)--node[fill=white,inner sep=.5mm]{$x_{d,0}$}(4.7,-0.6);

\draw [->] (5.3,0.3)--node[fill=white,inner sep=.5mm]{$x_{0,1}$}(9.7,0.3);
\draw [->] (5.3,0)--node[fill=white,inner sep=.5mm,]{$x_{1,1}$}(9.7,0);
\draw [->] (5.3,-0.6)--node[fill=white,inner sep=.5mm]{$x_{d,1}$}(9.7,-0.6); \draw[loosely dotted](11,0)--(19,0);
\draw[loosely dotted] (2,0)--(2,-0.6); \draw[loosely dotted]
(3,0)--(3,-0.6); \draw[loosely dotted] (7,0)--(7,-0.6);
\draw[loosely dotted] (8,0)--(8,-0.6); \draw[loosely dotted]
(22,0)--(22,-0.6); \draw[loosely dotted] (23,0)--(23,-0.6); \draw
[->] (21,0.3)--node[fill=white,inner sep=.5mm]{$x_{0,d-1}$}(24,0.3);
\draw [->] (21,0)--node[fill=white,inner sep=.5mm,]{$x_{1,d-1}$}(24,0);
\draw [->] (21,-0.6)--node[fill=white,inner sep=.5mm]{$x_{d,d-1}$}(24,-0.6);

\end{tikzpicture}}
\] with relations $x_{i,\ell+1} x_{j,\ell}-x_{j,\ell+1}x_{i,\ell}=0$ for $\ell =0,\ldots, d-2$ and $i,j=0,\ldots, d$. This algebra is a $d$-representation infinite algebra (cf Example 2.15 of~\cite{HIO12}).

Its associated preprojective algebra is the graded algebra $\Pi$ given by the graded quiver

\[\scalebox{.5}{
\begin{tikzpicture}[>=stealth, scale=1]
\node (P1) at (0,0){$1$}; \node (P2) at (5,0){$1$}; \node (P3) at
(10,0){$3$};

\node(P6) at (20,0){$d-2$}; \node(P7) at (25,0){$d-1$}; \node(P0) at
(12,4){$0$}; 

\draw [->] (0.3,0.3)--node[fill=white,inner
sep=.5mm,]{$x_{0,1}$}(4.7,0.3);
\draw [->] (0.3,0)--node[fill=white,inner sep=.5mm,]{$x_{1,1}$}(4.7,0);
\draw [->] (0.3,-0.6)--node[fill=white,inner
sep=.5mm,]{$x_{d,1}$}(4.7,-0.6);

\draw [->] (5.3,0.3)--node[fill=white,inner
sep=.5mm,]{$x_{0,2}$}(9.7,0.3);
\draw [->] (5.3,0)--node[fill=white,inner sep=.5mm,]{$x_{1,2}$}(9.7,0);
\draw [->] (5.3,-0.6)--node[fill=white,inner
sep=.5mm,]{$x_{d,2}$}(9.7,-0.6); \draw[loosely dotted](11,0)--(19,0);
\draw[loosely dotted] (2,0)--(2,-0.6); \draw[loosely dotted]
(3,0)--(3,-0.6); \draw[loosely dotted] (7,0)--(7,-0.6);
\draw[loosely dotted] (8,0)--(8,-0.6); \draw[loosely dotted]
(22,0)--(22,-0.6); \draw[loosely dotted] (23,0)--(23,-0.6); 
\draw
[->] (21,0.3)--node[fill=white,inner sep=.5mm,]{$x_{0,d-1}$}(24,0.3);
\draw [->] (21,0)--node[fill=white,inner sep=.5mm,]{$x_{1,d-1}$}(24,0);
\draw [->] (21,-0.6)--node[fill=white,inner
sep=.5mm,]{$x_{d,d-1}$}(24,-0.6);

\draw[->] (11,4)--node[fill=white,inner
sep=.5mm,xshift=-4mm]{$x_{0,0}$}(0,1); \draw[->]
(11.2,3.8)--node[fill=white,inner sep=.5mm,]{$x_{1,0}$}(0.2,0.8);
\draw[->] (11.6,3.4)--node[fill=white,inner
sep=.5mm,]{$x_{d,0}$}(0.6,0.4);

\draw[<-] (13,4)--node[fill=white,inner
sep=.5mm,xshift=4mm]{$x_{0,d}$}(25,1); \draw[<-]
(12.8,3.8)--node[fill=white,inner sep=.5mm,]{$x_{1,d}$}(24.8,0.8);
\draw[<-] (12.4,3.4)--node[fill=white,inner
sep=.5mm,]{$x_{d,d}$}(24.4,0.4);

\end{tikzpicture}}
\]
where ${\rm deg}(x_{i,d})=1$ for any $i$ and ${\rm deg}(x_{i,\ell})=0$ for any $i$ and $\ell\leq d-1$ and with relations $x_{i,\ell+1} x_{j,\ell}-x_{j,\ell+1}x_{i,\ell}=0$ for $i,j,\ell\in\mathbb{Z}/(d+1)\mathbb{Z}$.

Let $e$ be the idempotent associated with the vertex $0$. The algebra $\underline{\Pi}=\Pi/\Pi e\Pi$ is presented by an acyclic quiver, so it is finite-dimensional. We also clearly have $e\Lambda (1-e)=0$ and $e\Lambda e=k$. So we are in the setup of Theorem~\ref{mainresult}.

One easily checks that the algebra $e\Pi e$ is isomorphic to the subalgebra of $S$ generated by the monomials of the form $\prod_{i=0}^d X_i^{\alpha_i}$ where $\sum_{i=0}\alpha_i=d+1$. Moreover, using the fact that ${\rm deg}(x_{i_0,0}x_{i_1,1}\ldots x_{i_d,d})=1$ in $\Pi$ for any $i_0,\ldots, i_d$, we deduce that $e\Pi e$ is isomorphic as graded ring to the $(d+1)$-Veronese algebra $S^{(d+1)}$.

Therefore one has a triangle equivalence \begin{equation}\label{minamotoeq}\Db(\mod\Lambda)\simeq \Db(\qgr S^{(d+1)}).\end{equation} By Serre's theorem, \cite{Serre} one has $\qgr R\simeq {\sf coh}({\rm Proj}R)$ if $R$ is generated in degree $1$ and $R_0\simeq k$. So we have equivalences $\qgr S^{(d+1)}\simeq {\sf coh}({\rm Proj} S^{(d+1)})\simeq {\sf coh}(\rm Proj S)\simeq {\sf coh}\mathbb{P}^d$. Therefore the equivalence \eqref{minamotoeq} can be written \[\Db(\mod \Lambda)\simeq \Db({\sf coh}\mathbb{P}^d)\] and we recover the triangle equivalence due to Beilinson \cite{Bei}.

The algebra $\underline{\Lambda}=\Lambda/\Lambda e\Lambda$ is presented by the quiver \[\scalebox{0.5}{
\begin{tikzpicture}[>=stealth, scale=1]
 \node (P2) at (5,0){$1$}; \node (P3) at
(10,0){$2$};

\node(P6) at (20,0){$d-1$}; \node(P7) at (25,0){$d$};

\draw [->] (5.3,0.3)--node[fill=white,inner
sep=.5mm,]{$x_{0,1}$}(9.7,0.3);
\draw [->] (5.3,0)--node[fill=white,inner sep=.5mm,]{$x_{1,1}$}(9.7,0);
\draw [->] (5.3,-0.6)--node[fill=white,inner
sep=.5mm,]{$x_{d,1}$}(9.7,-0.6); \draw[loosely dotted](11,0)--(19,0);
\draw[loosely dotted] (7,0)--(7,-0.6);
\draw[loosely dotted] (8,0)--(8,-0.6); \draw[loosely dotted]
(22,0)--(22,-0.6); \draw[loosely dotted] (23,0)--(23,-0.6); \draw
[->] (21,0.3)--node[fill=white,inner sep=.5mm,]{$x_{0,d-1}$}(24,0.3);
\draw [->] (21,0)--node[fill=white,inner sep=.5mm,]{$x_{1,d-1}$}(24,0);
\draw [->] (21,-0.6)--node[fill=white,inner
sep=.5mm,]{$x_{d,d-1}$}(24,-0.6);

\end{tikzpicture}}
\] with relations $x_{i,\ell+1} x_{j,\ell}-x_{j,\ell+1}x_{i,\ell}=0$. By Theorem~\ref{thmAIR} there is a triangle equivalence \[\Sing^{\gr }(S^{(d+1)})\simeq \Db(\mod\underline{\Lambda})\] and we deduce a recollement  \[\xymatrix{\Sing^{\gr}(S^{(d+1)})\ar[r] & \Db({\sf coh}\mathbb{P}^d)\ar@<1ex>[l]\ar@<-1ex>[l]\ar[r] & \Db(\mod k)\ar@<1ex>[l]\ar@<-1ex>[l]}.\]

Note that for $d=1$, one easily checks that the graded algebra $S^{(2)}$ is isomorphic to the graded algebra $k[U,V,W]/(UW-V^2)$ where ${\rm deg}(U)={\rm deg}(V)={\rm deg}(W)=1$. Thus $Y={\rm Proj}(S^{(2)})\simeq \mathbb{P}^1$ can be seen  as a Fano hypersurface of $\mathbb{P}^2$ of degree $2$. Then we are in the setup of Theorem 3.11 (i) of \cite{Orl05}, and the graded singularity category of $S^{(2)}$ is equivalent to the triangulated category of graded B-branes.

\end{document}